\newtheorem{theorem}{Theorem}[section]
\newtheorem{definition}[theorem]{Definition}
\newtheorem{lemma}[theorem]{Lemma}
\newtheorem{remark}[theorem]{Remark}
\newenvironment{proof}[1][Proof]{\noindent\textbf{#1.} }{\ \rule{0.5em}{0.5em}}
\begin{document}

\title{Strong Local Nondeterminism of Spherical Fractional Brownian Motion}
\author{Xiaohong Lan \thanks{%
Corresponding author. Research of X. Lan is supported by NSFC grants
11501538. E-mail: xhlan@ustc.edu.cn} \\
School of Mathematical Sciences \\
University of Science and Technology of China \and Yimin Xiao \thanks{%
Research of Y. Xiao is partially supported by NSF grants DMS-1612885 and
DMS-1607089. E-mail: xiao@stt.msu.edu} \\
Department of Statistics and Probability\\
Michigan State University}
\maketitle

\begin{abstract}
Let $B = \left\{ B\left( x\right),\, x\in \mathbb{S}^{2}\right\} $ be the
fractional Brownian motion indexed by the unit sphere $\mathbb{S}^{2}$ with
index $0<H\leq \frac{1}{2}$, introduced by Istas \cite{IstasECP05}. We
establish optimal upper and lower bounds for its angular power spectrum $%
\{d_\ell, \ell = 0, 1, 2, \ldots\}$, and then exploit its high-frequency
behavior to establish the property of its strong local nondeterminism of $B$.
\end{abstract}

\textsc{Key words}: Angular power spectrum, Karhunen-Lo\`{e}ve expansion,
Spherical fractional Brownian motion, Strong local nondeterminism.

\textsc{2010 Mathematics Subject Classification}: 60G60, 60G22, 60G17.

\section{Introduction}

The spherical fractional Brownian motion (SFBM, for brevity) was introduced
by Istas in 2005 \cite{IstasECP05}, as an extension of the spherical
Brownian motion of L\'evy \cite{Levy} as well as a spherical analogue of
fractional Brownian motion indexed by the Euclidean spaces. Later Istas \cite%
{IstasSPL06,Istas07} established the Karhunen-Lo\`{e}ve expansion and
studied quadratic variations of the spherical fractional Brownian motion.

The purpose of this paper is to investigate the property of strong local
nondeterminism (SLND) for the spherical fractional Brownian motion. This is
motivated by studies of sample path properties of Gaussian random fields
indexed by the Euclidean space $\mathbb{R}^N$ and by the currently
increasing interest in stochastic modeling of spherical data in statistics,
cosmology and other applied areas (see below).

The concept of local nondeterminism (LND) of a Gaussian process was first
introduced by Berman \cite{Berman73} to unify and extend his methods for
studying the existence and joint continuity of local times of real-valued
Gaussian processes. Roughly speaking, a Gaussian process is said to have the
LND property if has locally approximately independent increments, see  \cite[%
Lemma 2.3]{Berman73} for precise description. This property allowed Berman
to overcome some difficulties caused by the complex dependence structure of
a non-Markovian Gaussian process for studying its local times. Pitt \cite%
{Pitt78} and Cuzick \cite{Cuzick82} extended Berman's LND to Gaussian random
fields. However, the property of LND is not enough for establishing fine
regularity properties such as the law of the iterated logarithm and the
uniform modulus of continuity for the local times or self-intersection local
times of Gaussian random fields. For studying these and many other problems
on Gaussian random fields, the appropriate properties of strong local
nondeterminism (SLND) have proven to be more powerful. Instead of recalling
definitions of various forms of strong local nondeterminism for (isotropic
or anisotropic) Gaussian random fields indexed by $\mathbb{R}^N$ and their
applications, we refer to Xiao \cite{X07, X1, X2} for more information.

Recently, Lan, Marinucci and Xiao \cite{LanMarXiao1} have studied the SLND
property of a class of Gaussian random fields indexed by the unit sphere $%
\mathbb{S}^{2}$, which are also called spherical Gaussian random fields. The
main difference between \cite{LanMarXiao1} and the aforementioned work for
Gaussian fields indexed by the Euclidean space is that [16] takes the
spherical geometry of $\mathbb{S}^{2}$ into full consideration and its
method relies on harmonic analysis on the sphere. More specifically, Lan,
Marinucci and Xiao have considered a centered isotropic Gaussian random
field $T = \{T(x), x \in \mathbb{S}^{2}\}$. That is, $T$ satisfies 
\begin{equation}  \label{Eq:Iso}
\mathbb{E} \big(T(x) T(y)\big) = {\mathbb{E}} \big(T(gx) T(gy)\big)
\end{equation}
for all $g\in SO(3)$ which is the group of rotations in $\mathbb{R}^3$. See 
\cite{MarPecbook} for a systematic account on random fields on $\mathbb{S}%
^{2}$. By applying harmonic analytic tools on the sphere, Lan, Marinucci and
Xiao \cite{LanMarXiao1} have proved that the SLND property of an isotropic
Gaussian field $T$ on $\mathbb{S}^{2}$ is determined by the high-frequency
behavior of its angular power spectrum. Moreover, by applying SLND, they
have established exact uniform modulus of continuity for a class of
isotropic Gaussian fields on $\mathbb{S}^{2}$.

Since SFBM $B = \left\{ B\left( x\right),\, x\in \mathbb{S}^{2}\right\} $ is
not isotropic in the sense of \eqref{Eq:Iso}, the results on SLND in \cite%
{LanMarXiao1} are not directly applicable. In our approach we will make use
of the Karhunen-Lo\`{e}ve expansion of the spherical fractional Brownian
motion obtained by Istas \cite{IstasSPL06} and derive optimal upper and
lower bounds for the coefficients $\{d_\ell, \ell = 0, 1, \ldots\}$ (see %
\eqref{Eq:dl} below for the definition). These bounds for $\{d_\ell\}$
correct the last part of Theorem 1 in \cite{IstasSPL06} and will be useful
for studying the dependence structures and sample path properties of SFBM.
This paper provides an important step towards this direction. More
specifically, we demonstrate that the coefficients $\{d_\ell\}$ play the
same role as the angular power spectrum of an isotropic Gaussian field on $%
\mathbb{S}^{2}$ in \cite{LanMarXiao1} and their high frequency behavior
determines the property of strong local nondeterminism of SFBM. For this
reason, we will also call the sequence $\{d_\ell, \ell = 0, 1, \ldots\}$ the
angular power spectrum of SFBM. 

Similarly to the cases of Gaussian random fields indexed by the Euclidean
space $\mathbb{R}^N$ (cf. \cite{X07, X1,X2}), we expect that the SLND
property in Theorem \ref{Th:SLND} will be useful for studying regularity
(e.g., the exact modulus of continuity, exact modulus of
nondifferentiability, etc) and fractal properties of SFBM. This will be
carried out in a subsequent paper \cite{LX17}.

Our analysis on SFBM and other spherical Gaussian random fields is strongly
motivated by applications in a number of scientific areas, such as
geophysics, astrophysics, cosmology, and atmospheric sciences (see e.g. \cite%
{AlMi,CaMa, deBoy,Dode2004}). Huge data sets from satellite missions such as
the Wilkinson Microwave Anisotropy Probe (\emph{WMAP}) of NASA (see
http://map.gsfc.nasa.gov/) and the \emph{Planck} mission of the European
Space Agency (see http://sci.esa.int/planck/53103-planck-cosmology/) have
been collected and made publicly available. Spherical random fields (usually
assumed to be Gaussian) have been proposed for modeling such data sets.

Related to aforementioned aspects, we also mention that in probability and
statistics literature various isotropic or anisotropic Gaussian random
fields on $\mathbb{S}^{2}$ have been constructed and studied (see e.g. \cite%
{EsIstas10, HZR1, HZR2,JS,PanTala}). Excursion probabilities and topological
properties of excursion sets of isotropic Gaussian random fields on $\mathbb{%
S}^{2}$ have been studied in \cite{CX16,MaVa}. Many interesting questions on
probabilistic and statistical properties of anisotropic Gaussian random
fields on the sphere can be raised. In order to study these problems. it
would be interesting to establish appropriate properties of strong local
nondeterminism for anisotropic Gaussian random fields on $\mathbb{S}^{2}$.

The rest of the paper is organized as follows. In Section 2, we recall
briefly some background material on SFBM, including its Karhunen-Lo\`{e}ve
expansion from \cite{IstasSPL06}, and an analysis of its random
coefficients. Our main result in this section is Theorem \ref{APSbound},
which provides optimal upper and lower bounds for the angular power spectrum 
$\{d_\ell, \ell = 0, 1, \ldots\}$ of SFBM. In Section 3, we combine the high
frequency behavior of $\{d_\ell\}$ and Proposition 7 in \cite{LanMarXiao1}
to establish the property of strong local nondeterminism for SFBM. 


\section{SFBM and asymptotic behavior of its angular power spectrum}

Let $N$ be the North pole on $\mathbb{S}^{2},$ and $d_{\mathbb{S}^{2}}\left(
x,y\right) $ the geodesic distance between $x$ and $y$ on $\mathbb{S}^{2}$.
Recall from Istas \cite{IstasECP05} the definition of SFBM.

\begin{definition}
\label{def} The SFBM $B= \left\{ B\left( x\right),\, x\in \mathbb{S}%
^{2}\right\} $ is a centered Gaussian random field with 
\begin{equation}
B\left( N\right) =0,\ a.s.  \label{North}
\end{equation}%
and%
\begin{equation}
\mathbb{E}\left[ B\left( x\right) -B\left( y\right) \right] ^{2}= d_{\mathbb{%
S}^{2}}\left( x,y\right) ^{2H},  \label{Variogram}
\end{equation}%
for any $x,y\in \mathbb{S}^{2}$ and $0<H\leq \frac{1}{2}.$
\end{definition}

It is well known that fractional Brownian motion indexed by $\mathbb{R}^{d}$
can be defined for every $0< H \le 1$. This is different from the case when
the index set is $\mathbb{S}^{2}$. Istas \cite{IstasECP05} proved that SFBM 
exists if and only if the Hurst index $H\in (0,\frac{1}{2}].$ When $H=\frac{1%
}{2}$, it is the classical L\'{e}vy spherical Brownian motion, see \cite%
{Levy, Noda}.

Our work is based on the following Karhunen-Lo\`{e}ve expansion of $\big\{%
B\left( x\right) ,$ $x\in \mathbb{S}^{2}\big\}$ proved by Istas \cite[%
Theorem 1]{IstasSPL06}: 
\begin{equation}
B\left( x\right) =\dsum\limits_{\ell =0}^{\infty }\dsum\limits_{m=-\ell
}^{\ell }i\sqrt{\pi d_{\ell }}\,\varepsilon _{\ell m}\left( Y_{\ell m}\left(
x\right) -Y_{\ell m}\left( N\right) \right) .  \label{repBM}
\end{equation}%
Here $\left\{ Y_{\ell m}:\ell =0,1,...,m=-\ell ,...,\ell \right\} $ are the
spherical harmonic functions on $\mathbb{S}^{2}$; that is, they are
eigenfunctions of the spherical Laplacian: 
\begin{equation*}
\Delta _{{\mathbb{S}}^{2}}=\frac{1}{\sin \theta }\frac{\partial }{\partial
\theta }\left\{ \sin \theta \frac{\partial }{\partial \theta }\right\} +%
\frac{1}{\sin ^{2}\theta }\frac{\partial ^{2}}{\partial \phi ^{2}},
\end{equation*}%
where $(\theta ,\phi )\in \left[ 0,\pi \right] \times \left[ 0,2\pi \right) $
denotes the spherical coordinates of $x\in \mathbb{S}^{2}$. More precisely, $%
\left\{ Y_{\ell m}:\ell =0,1,...,m=-\ell ,...,\ell \right\} $ satisfy 
\begin{equation*}
\Delta _{\mathbb{S}^{2}}Y_{\ell m}(\theta ,\phi )=-\ell (\ell +1)Y_{\ell
m}(\theta ,\phi ).
\end{equation*}%
It is known (cf. \cite{MarPecbook}) that $\left\{ Y_{\ell m}:\ell
=0,1,...,m=-\ell ,...,\ell \right\} $ form an orthonormal basis for the
space $L^{2}\left( \mathbb{S}^{2},d\sigma \right) $, where $d\sigma =\sin
\theta d\theta d\phi $ is the Lebesgue measure on $\mathbb{S}^{2}$. 
An explicit form for spherical harmonics is given by 
\begin{equation}
\begin{split}
Y_{\ell m}\left( \theta ,\phi \right) & =\sqrt{\frac{2\ell +1}{4\pi }\frac{%
\left( \ell -m\right) !}{\left( \ell +m\right) !}}P_{\ell m}\left( \cos
\theta \right) e^{im\phi },\ \ \hbox{ for }m\geq 0, \\
Y_{\ell m}\left( \theta ,\phi \right) & =\left( -1\right) ^{m}\overline{Y}%
_{\ell ,-m}\left( \theta ,\phi \right) ,\qquad \qquad \qquad \quad 
\hbox{
for }m<0,
\end{split}
\label{Eq:YlmConj}
\end{equation}%
where $\overline{z}$ denotes the complex conjugate of $z\in \mathbb{C},$ and 
$P_{\ell m}\left( \cos \vartheta \right) $ are the associated Legendre
functions (cf. \cite[pp.315--316]{MarPecbook}) defined in terms of the
Legendre polynomials $\{P_{\ell },\ell =0,1,\ldots \}$ as 
\begin{equation*}
P_{\ell m}\left( x\right) =(-1)^{m}(1-x^{2})^{m/2}\frac{d^{m}}{dx^{m}}%
P_{\ell }\left( x\right) 
\end{equation*}%
for $m=0,1,\ldots ,\ell $, and 
\begin{equation*}
P_{\ell m}\left( x\right) =(-1)^{m}\frac{(\ell -m)!}{(\ell +m)!}\,P_{\ell
,-m}\left( x\right) 
\end{equation*}%
for $m$ negative. 
Moreover, the following orthonormality property holds:%
\begin{equation}
\int_{\mathbb{S}^{2}}Y_{lm}\left( x\right) \overline{Y}_{l^{\prime
}m^{\prime }}\left( x\right) d\sigma \left( x\right) =\delta _{l}^{l^{\prime
}}\delta _{m}^{m^{\prime }}.  \label{Eq:YlmOrth}
\end{equation}

Now recall from \cite{IstasSPL06} that, for all $\ell \geq 0$, the
coefficients $d_{\ell }$ in (\ref{repBM}) are defined by 
\begin{equation}
d_{\ell }=\frac{1}{2\pi }\int_{\mathbb{S}^{2}}d_{\mathbb{S}^{2}}\left(
x,N\right) ^{2H}P_{\ell }\left( \left\langle x,N\right\rangle \right)
d\sigma \left( x\right) ,  \label{Eq:dl}
\end{equation}%
where $\left\langle \cdot ,\cdot \right\rangle $ is the usual inner product
in $\mathbb{R}^{3}$. Similarly to the angular power spectrum of an isotropic
Gaussian random field on $\mathbb{S}^{2}$, the sequence $\left\{ d_{\ell
},\ell =0,1,2,...\right\} $ plays an important role in determining the
dependence structure and other probabilistic properties of the SFBM $%
\{B\left( x\right) ,x\in \mathbb{S}^{2}\}.$ Hence we will also refer to $%
\left\{ d_{\ell },\ell =0,1,2,...\right\} $ as the \emph{angular power
spectrum} of SFBM.

Moreover, it follows from Theorem 1 of Istas \cite{IstasSPL06} that the
Karhunen-Lo\`{e}ve expansion (\ref{repBM}) holds in $L^{2}\left( \mathbb{S}%
^{2}\times \Omega ,d\sigma \otimes \mathbb{P}\right) $ sense and in $%
L^{2}\left( \mathbb{P}\right) $ sense for every fixed $x\in \mathbb{S}^{2}$.
Hence one can represent the random coefficients $\varepsilon _{\ell m},\ell
=0,1,...,m=-\ell ,...,\ell $ in (\ref{repBM}) by 
\begin{equation*}
\varepsilon _{\ell m}=-i\left( \pi d_{\ell }\right) ^{-1/2}\int_{\mathbb{S}%
^{2}}B(x)\overline{Y_{\ell m}}(x)d\sigma (x),
\end{equation*}%
where the equality holds in $L^{2}(\mathbb{P})$ sense. 
Notice that $\{\varepsilon _{\ell m}\}_{lm}$ is a set of complex-valued
Gaussian random variables. Obviously, $\mathbb{E}\left( \varepsilon _{\ell
m}\right) =0$ for all $\ell ,m,$ due to the zero-mean property of $B(x).$
Moreover, recall (\ref{Eq:YlmConj}) and (\ref{Eq:YlmOrth}), we can verify
that 
\begin{equation}
\varepsilon _{\ell m}=\left( -i\right) ^{m}\varepsilon _{\ell ,-m}\ 
\hbox{
and }\ \mathbb{E}\left( \varepsilon _{\ell _{1}m_{2}}\overline{\varepsilon }%
_{\ell _{2}m_{2}}\right) =\delta _{\ell _{2}}^{\ell _{1}}\delta
_{m_{2}}^{m_{1}}.  \label{indep-coef}
\end{equation}%
Therefore, $\left\{ \varepsilon _{\ell m},\ell \geq 0,m=0,1,...,\ell
\right\} $ is a set of $i.i.d.$ standard complex Gaussian random variables.


The following is the main result of this section. The bounds in (\ref%
{Bound-dl}) correct the last part of Theorem 1 in \cite{IstasSPL06}. The
high-frequency behavior of $d_{\ell }$ in \eqref{Bound-dl} is essential for
proving the SLND property of $B$ in Theorem \ref{Th:SLND} below. Together,
they will allow us to study precise analytic and geometric properties of the
sample functions of SFBM. See \cite{LX17} for further information.

\begin{theorem}
\label{APSbound} Let $\left\{ B(x),x\in \mathbb{S}^{2}\right\} $ be the
spherical fractional Brownian motion of index $H\in (0,1/2]$. There exists a
uniform constant $K_{1}$ 
such that 
\begin{equation}
\begin{split}
& K_{1}^{-1}\leq d_{0}\leq K_{1} \\
K_{1}^{-1}\ell ^{-(2H+2)}& \leq d_{\ell }\leq K_{1}\ell ^{-(2H+2)}\quad %
\hbox{ for all }\,\ell =1,2,....
\end{split}
\label{Bound-dl}
\end{equation}
\end{theorem}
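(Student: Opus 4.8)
The plan is to turn the surface integral (\ref{Eq:dl}) into a one--dimensional Legendre coefficient and then read off its size from the endpoint behaviour of the integrand. Parametrising $x$ by spherical coordinates $(\theta,\phi)$ about the North pole, so that $d_{\mathbb{S}^{2}}(x,N)=\theta$ and $\langle x,N\rangle=\cos\theta$, the $\phi$--integration is trivial and (\ref{Eq:dl}) collapses to
\begin{equation*}
d_\ell=\int_0^\pi\theta^{2H}P_\ell(\cos\theta)\sin\theta\,d\theta=\int_{-1}^1\big(\arccos t\big)^{2H}P_\ell(t)\,dt .
\end{equation*}
For $\ell=0$ this is $d_0=\int_0^\pi\theta^{2H}\sin\theta\,d\theta$, which for $H\in(0,1/2]$ lies between two positive constants, giving the first line of (\ref{Bound-dl}).

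For $\ell\ge1$ I would read $d_\ell$ as the $\ell$-th Legendre coefficient of $g(t)=(\arccos t)^{2H}$ and use the fact that the decay rate of such coefficients is dictated by the singularities of $g$ at $t=\pm1$. Expanding $\arccos$ near $t=1$ gives $g(t)=2^{H}(1-t)^{H}\big(1+O(1-t)\big)$, whereas near $t=-1$ one has only the milder behaviour $g(t)=\pi^{2H}-c\,(1+t)^{1/2}+\cdots$; hence the dominant singularity is $2^{H}(1-t)^{H}$. I would split $g(t)=2^{H}(1-t)^{H}+r(t)$ and treat the two pieces separately, using the exact evaluation
\begin{equation*}
\int_{-1}^1(1-t)^{\sigma}P_\ell(t)\,dt=\frac{(-1)^\ell\,2^{\sigma+1}\big(\Gamma(\sigma+1)\big)^2}{\Gamma(\ell+\sigma+2)\,\Gamma(\sigma+1-\ell)} ,
\end{equation*}
which, after rewriting $1/\Gamma(\sigma+1-\ell)$ by the reflection formula and applying Stirling, produces for $\sigma=H$ a term of exact order $\ell^{-(2H+2)}$ with an explicit constant proportional to $\sin(\pi H)\,\Gamma(H+1)^2$.

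The remainder $r$ has only an $O\big((1-t)^{H+1}\big)$ singularity at $t=1$ and the $O\big((1+t)^{1/2}\big)$ one at $t=-1$. Integrating against $P_\ell$ and extracting the latter by the substitution $t\mapsto-t$ (again through the model integral), what is left is smooth enough that repeated integration by parts---using $(2\ell+1)P_\ell=P_{\ell+1}'-P_{\ell-1}'$ to antidifferentiate $P_\ell$ and to kill the boundary terms through $P_n(\pm1)=(\pm1)^n$---yields $\int_{-1}^1 r(t)P_\ell(t)\,dt=O(\ell^{-3})$. Since $\ell^{-3}\le\ell^{-(2H+2)}$ for $H\le\tfrac12$, the upper bound in (\ref{Bound-dl}) is immediate; the matching lower bound of order $\ell^{-(2H+2)}$ follows once the main term is shown to dominate the $O(\ell^{-3})$ remainder.

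The lower bound is the heart of the matter. It needs the leading constant to be nonzero---guaranteed by $\sin(\pi H)>0$---and, more importantly, the remainder to be of strictly smaller order, which is the case exactly when $H<\tfrac12$. The borderline value $H=\tfrac12$ (Lévy's spherical Brownian motion) is genuinely delicate, because there both endpoint singularities contribute at the common order $\ell^{-3}$ and can interfere; I would therefore treat it by a direct computation---for instance via the closed form of $\int_0^\pi P_n(\cos\theta)\,d\theta$---rather than by the splitting above. The finitely many small $\ell$ not covered by the asymptotics are verified by hand, and the constant $K_1=K_1(H)$ is then chosen uniform in $\ell$.
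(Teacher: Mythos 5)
Your reduction to $d_\ell=\int_{-1}^{1}(\arccos t)^{2H}P_\ell(t)\,dt$, and your strategy of peeling off the endpoint singularity $2^{H}(1-t)^{H}$ and evaluating its Legendre coefficient in closed form by Gamma functions, reflection and Stirling, is essentially the same computation the paper performs: there the replacement of $\theta^{2H}$ by $\big(2\sin\frac{\theta}{2}\big)^{2H}=2^{H}(1-t)^{H}$ is done multiplicatively via the pointwise two-sided bound $\sin\frac{\theta}{2}\le\frac{\theta}{2}\le 2\sin\frac{\theta}{2}$, and the resulting coefficient is evaluated through the Dirichlet--Mehler formula and a contour integral, ending in the same Beta-function ratio you obtain from your model integral. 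Your additive splitting $g=2^{H}(1-t)^{H}+r$ is actually the more defensible route, since a pointwise sandwich of part of the integrand does not survive integration against the sign-changing kernel $P_\ell$. Nevertheless your proposal has two genuine gaps. First, the sign: the reflection formula applied to your model integral gives
\[
\int_{-1}^{1}(1-t)^{H}P_\ell(t)\,dt=-\,\frac{2^{H+1}\Gamma(H+1)^{2}\sin(\pi H)}{\pi}\,\frac{\Gamma(\ell-H)}{\Gamma(\ell+H+2)}<0
\quad(\ell\ge1),
\]
so your main term is \emph{negative} of order $\ell^{-(2H+2)}$, and what the argument can deliver is a two-sided bound on $|d_\ell|$, not the stated positive lower bound on $d_\ell$; indeed a direct check gives $d_1=\int_0^\pi\theta\cos\theta\sin\theta\,d\theta=-\pi/4$ at $H=1/2$. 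You must track this sign and reconcile it with the statement being proved.

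Second, and more seriously, the case $H=1/2$ that you defer to a ``direct computation'' cannot be completed: your own accounting shows the $t=1$ and $t=-1$ contributions are both of order $\ell^{-3}$ there, and they in fact cancel identically for even $\ell$. Concretely, $d_2=\int_0^\pi\theta\,P_2(\cos\theta)\sin\theta\,d\theta=0$ (and the Karhunen--Lo\`eve expansion of L\'evy's spherical Brownian motion is supported on odd harmonics), so the lower bound $|d_\ell|\ge K_1^{-1}\ell^{-3}$ fails for even $\ell$ at $H=1/2$ and no refinement of the splitting can rescue it; the computation you propose would refute the bound rather than establish it. For $H<1/2$ your plan does succeed for all sufficiently large $\ell$ once the sign is sorted out, but the finitely many remaining $\ell$ still require an argument that $d_\ell\neq0$ (for instance via strict conditional negative definiteness of $d_{\mathbb{S}^2}(x,y)^{2H}$), not merely the assertion that they are ``verified by hand.''
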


\begin{proof}
We work in spherical coordinates $\left( \theta ,\phi \right) $ with $\theta
\in \left[ 0,\pi \right] $ and $\phi \in \left[ 0,2\pi \right] .$ By the
definition of $d_{\ell }$ in (\ref{Eq:dl}) and a change of variable $x=\cos
\theta $ we obtain 
\begin{equation}
d_{\ell }=\int_{0}^{\pi }\theta ^{2H}P_{\ell }\left( \cos \theta \right)
\sin \theta d\theta .  \label{Eq:dl1}
\end{equation}%
Recall from \cite{IstasSPL06} the Dirichlet-Mehler representation for $%
P_{\ell }\left( \cos \theta \right) (0<\theta <\pi )$, 
\begin{equation*}
P_{\ell }\left( \cos \theta \right) =\frac{\sqrt{2}}{\pi }\int_{\theta
}^{\pi }\frac{\sin \left( (\ell +\frac{1}{2})\varphi \right) }{\sqrt{\cos
\theta -\cos \varphi }}d\varphi ,
\end{equation*}%
we see that (\ref{Eq:dl1}) becomes 
\begin{equation*}
\begin{split}
d_{\ell }& =\frac{\sqrt{2}}{\pi }\int_{0}^{\pi }\theta ^{2H}\sin \theta
\int_{\theta }^{\pi }\frac{\sin \left( \left( \ell +\frac{1}{2}\right)
\varphi \right) }{\sqrt{\cos \theta -\cos \varphi }}d\varphi d\theta  \\
& =\frac{\sqrt{2}}{\pi }\int_{0}^{\pi }\sin \left( \Big(\ell +\frac{1}{2}%
\Big)\varphi \right) \left[ \int_{0}^{\varphi }\frac{\theta ^{2H}\sin \theta 
}{\sqrt{\cos \theta -\cos \varphi }}\,d\theta \right] \,d\varphi .
\end{split}%
\end{equation*}%
From the elementary fact that $\sin \frac{\theta }{2}\leq \frac{\theta }{2}%
\leq \theta \left[ 1-\frac{1}{6}\left( \frac{\theta }{2}\right) ^{2}\right]
\leq 2\sin \frac{\theta }{2}$ for all $0\leq \theta \leq \pi ,$ it follow
that 
\begin{equation*}
\left( \sin \frac{\theta }{2}\right) ^{2H}\leq \left( \frac{\theta }{2}%
\right) ^{2H}\leq 2^{2H}\cdot \left( \sin \frac{\theta }{2}\right)
^{2H},\quad 0\leq \theta \leq \pi .
\end{equation*}%
Hence, we have 
\begin{equation}
\widetilde{d}_{\ell }\leq d_{\ell }\leq 2^{2H}\widetilde{d}_{\ell },
\label{eq:d-dti}
\end{equation}%
where 
\begin{equation*}
\widetilde{d}_{\ell }=\frac{2^{2H+1/2}}{\pi }\int_{0}^{\pi }\sin \left(
\left( \ell +\frac{1}{2}\right) \varphi \right) \int_{0}^{\varphi }\frac{%
\left( \sin \frac{\theta }{2}\right) ^{2H}\sin \theta }{\sqrt{\cos \theta
-\cos \varphi }}\,d\theta d\varphi .
\end{equation*}%
Meanwhile, by a change of variable $u=\sin \frac{\theta }{2}$ in the inside
integral, we have 
\begin{equation*}
\begin{split}
\int_{0}^{\varphi }\frac{\left( \sin \frac{\theta }{2}\right) ^{2H}\sin
\theta }{\sqrt{\cos \theta -\cos \varphi }}d\theta & =2^{\frac{3}{2}%
}\int_{0}^{\sin \frac{\varphi }{2}}\frac{u^{2H+1}}{\sqrt{\sin ^{2}\frac{%
\varphi }{2}-u^{2}}}du \\
& =2^{\frac{1}{2}}\left( \sin \frac{\varphi }{2}\right) ^{2H+1}\int_{0}^{1}%
\frac{v^{H}}{\sqrt{1-v}}dv \\
& =2^{\frac{1}{2}}B\left( H+1,\frac{1}{2}\right) \left( \sin \frac{\varphi }{%
2}\right) ^{2H+1},
\end{split}%
\end{equation*}%
where $B\left( \cdot ,\cdot \right) $ is the Beta function defined on $%
\mathbb{R}
^{+}\times 
\mathbb{R}
^{+}.$ Consequently, 
\begin{equation}
\widetilde{d}_{\ell }=\frac{2^{2H+1}}{\pi }B\left( H+1,\frac{1}{2}\right)
\int_{0}^{\pi }\sin \left( \left( \ell +\frac{1}{2}\right) \varphi \right)
\left( \sin \frac{\varphi }{2}\right) ^{2H+1}d\varphi .  \label{d-tilda}
\end{equation}%
For $\ell =0,$ 
\begin{equation*}
\widetilde{d}_{0}=\frac{2^{2H+1}}{\pi }B\left( H+1,\frac{1}{2}\right)
\int_{0}^{\pi }\left( \sin \frac{\varphi }{2}\right) ^{2H+2}d\varphi ,
\end{equation*}%
which is readily seen that $\left\vert \widetilde{d}_{0}\right\vert \leq 8.$
For $\ell \geq 1,$ we will make use of some techniques from complex
analysis. Let $z=e^{i\varphi /2},$ then the integral in (\ref{d-tilda}) can
be written as 
\begin{equation}
\int_{0}^{\pi }\sin \left( \left( \ell +\frac{1}{2}\right) \varphi \right)
\left( \sin \frac{\varphi }{2}\right) ^{2H+1}d\varphi =2\func{Im}\int_{%
\mathcal{C}}f_{\ell }\left( z\right) dz,  \label{dti-Imintf}
\end{equation}%
where $\mathcal{C}=\left\{ z:z=e^{it},\ 0\leq t\leq \pi /2\right\} $ is the
unit circle in the first quadrant with the direction counterclockwise and $%
f_{\ell }\left( z\right) $ is the principal branch of the complex function%
\begin{equation*}
F_{\ell }\left( z\right) =\frac{z^{2\ell }}{2^{2H}i^{2H+1}}\left( z-\frac{1}{%
z}\right) ^{2H+1},\quad z\in 
\mathbb{C}
,
\end{equation*}%
Obviously, $f_{\ell }(z)$ is analytic in $\mathbb{C}\backslash \left\{ 0,\pm
1\right\} $. Moreover, let $\epsilon >0$ be an arbitary small value, $%
\mathcal{L}_{1}$ and $\mathcal{L}_{2}$ the two line segments defined by 
\begin{equation*}
\mathcal{L}_{1}=\left\{ z:z=i\left( 1-t\right) ,\ \epsilon \leq t\leq
1\right\} 
\end{equation*}%
and 
\begin{equation*}
\mathcal{L}_{2}=\left\{ z:z=t,\epsilon \leq t\leq 1-\epsilon \right\} ;
\end{equation*}%
Meantime, let $\mathcal{C}_{1}$ and $\mathcal{C}_{2}$ be the circles with
radius $\epsilon $ in the first and second quadrants, defined respectively
by 
\begin{equation*}
\mathcal{C}_{1}=\left\{ z:z=\epsilon e^{it},0\leq t\leq \pi /2\right\} 
\end{equation*}%
and 
\begin{equation*}
\mathcal{C}_{2}=\left\{ z:z=\epsilon e^{i\left( \pi -t\right) }+1,0\leq
t\leq \pi /2\right\} .
\end{equation*}%
Now consider the following integrals 
\begin{eqnarray*}
\int_{\mathcal{L}_{1}}f_{\ell }\left( z\right) dz &=&\frac{i^{2\ell -2H-1}}{%
2^{2H}i^{2H+1}}\int_{\epsilon }^{1}\left( 1-t\right) ^{2\ell -2H-1}\left[
-\left( 1-t\right) ^{2}-1\right] ^{2H+1}dt \\
&=&\frac{\left( -1\right) ^{\ell }}{2^{2H}}\int_{\epsilon }^{1}u^{2\ell
-2H-1}\left[ u^{2}+1\right] ^{2H+1}dt,
\end{eqnarray*}%
\begin{eqnarray*}
\int_{\mathcal{L}_{2}}f_{\ell }\left( z\right) dz &=&\frac{1}{2^{2H}i^{2H+1}}%
\int_{\epsilon }^{1}t^{2\ell -2H-1}\left( t^{2}-1\right) ^{2H+1}dt \\
&=&\frac{i^{2H+1}}{2^{2H}}\int_{\epsilon }^{1}\left( 1-v\right) ^{\ell
-H-1/2}v^{2H+1}dt,
\end{eqnarray*}%
and%
\begin{equation*}
\int_{\mathcal{C}_{1}}f_{\ell }\left( z\right) dz=\frac{1}{2^{2H}i^{2H+1}}%
\int_{0}^{\frac{\pi }{2}}\left( \epsilon e^{i\left( \pi -t\right) }\right)
^{2\ell -2H-1}\left( \epsilon ^{2}e^{2i\left( \pi -t\right) }-1\right)
^{2H+1}dt,
\end{equation*}%
\begin{equation*}
\int_{\mathcal{C}_{2}}f_{\ell }\left( z\right) dz=\frac{1}{2^{2H}i^{2H+1}}%
\int_{0}^{\frac{\pi }{2}}\left( 1+\epsilon e^{i\left( \pi -t\right) }\right)
^{2\ell -2H-1}\left( \epsilon ^{2}e^{2i\left( \pi -t\right) }+2\epsilon
e^{i\left( \pi -t\right) }\right) ^{2H+1}dt.
\end{equation*}%
Careful calculations show that 
\begin{equation}
\func{Im}\int_{\mathcal{L}_{1}}f_{\ell }\left( z\right) dz=0,\
\lim_{\epsilon \rightarrow 0}\left\vert \int_{\mathcal{C}_{j}}f_{\ell
}\left( z\right) dz\right\vert =0,\ j=1,2  \label{eq:Int1}
\end{equation}%
and 
\begin{equation}
\lim_{\epsilon \rightarrow 0}\int_{\mathcal{L}_{2}}f_{\ell }\left( z\right)
dz=\frac{i^{2H+1}}{2^{2H}}B\left( 2H+2,\ell -H+1/2\right)   \label{eq:Int2}
\end{equation}%
Thus, by the Cauchy integral theorem in complex analysis, we have 
\begin{equation*}
\begin{split}
\int_{\mathcal{C}}f_{\ell }\left( z\right) dz& =-\lim_{\epsilon \rightarrow
0}\int_{\mathcal{L}_{1}+\mathcal{C}_{1}+\mathcal{L}_{2}+\mathcal{C}%
_{2}}f_{\ell }\left( z\right) dz \\
& =-\lim_{\epsilon \rightarrow 0}\int_{\mathcal{L}_{1}}f_{\ell }\left(
z\right) dz-\frac{i^{2H+1}}{2^{2H}}B\left( 2H+2,\ell -H+\frac{1}{2}\right) 
\end{split}%
\end{equation*}%
which leads to that 
\begin{equation}
\func{Im}\int_{\mathcal{C}}f_{\ell }\left( z\right) dz=2^{-2H-1}B\left(
2H+2,\ell -H+\frac{1}{2}\right) \sin \left[ \Big(H+\frac{1}{2}\Big)\pi %
\right] .  \label{eq:Imf}
\end{equation}%
in view of the equalities (\ref{eq:Int1}) and (\ref{eq:Int2}). Therefore, by
combining the equalities (\ref{eq:Imf}) and (\ref{d-tilda}), (\ref%
{dti-Imintf}) above, we obtain that%
\begin{equation*}
\widetilde{d}_{\ell }=\frac{2}{\pi }B\left( H+1,\frac{1}{2}\right) B\left(
2H+2,\ell -H+\frac{1}{2}\right) \sin \left[ \Big(H+\frac{1}{2}\Big)\pi %
\right] .
\end{equation*}%
Here, recall the formula 
\begin{equation*}
B\left( a,b\right) =\frac{\Gamma \left( a\right) \Gamma \left( b\right) }{%
\Gamma \left( a+b\right) },a>0,b>0
\end{equation*}%
with Gamma function $\Gamma \left( y\right) =\left( y-1\right) \Gamma \left(
y-1\right) $ for any $y>1$ and the stirling's approximation 
\begin{equation*}
\Gamma \left( y\right) =\sqrt{2\pi y}\left( \frac{y}{e}\right) ^{y}\left(
1+O\left( \frac{1}{y}\right) \right) ,\ \ \hbox{ as }\ y\rightarrow \infty ,
\end{equation*}%
where the error term $O\left( \frac{1}{y}\right) $ being the same order as $%
\frac{1}{y},$ we derive the following estimates: there exists a uniform
constant $C_{1}$ such that for any $\ell \geq 1,$ 
\begin{equation*}
C_{1}^{-1}\ell ^{-\left( 2H+2\right) }\leq \widetilde{d}_{\ell }\leq
C_{1}\ell ^{-\left( 2H+2\right) }.
\end{equation*}%
Let $K_{1}=2C_{1},$ then the inequalities in (\ref{Bound-dl}) are derived in
view of (\ref{eq:d-dti}).
\end{proof}

\section{Strong local nondeterminism}

In order to prove the strong local nondeterminism property of $\left\{
B\left( x\right) ,x\in \mathbb{S}^{2}\right\},$ we first recall the
following lemma, which is a consequence of Proposition 7 in \cite%
{LanMarXiao1}.

\begin{lemma}
\label{Ing} Assume a sequence $\left\{ d_{\ell },\ell =0,1,...\right\} $
satisfies the condition (\ref{Bound-dl}). Then there exists a constant $%
C_{2}>0$ depending on $H$ only, such that for all choices of $n\in \mathbb{N}%
,$ all $x,x_{1},...,x_{n}\in \mathbb{S}^{2},$ and $\gamma _{j}\in \mathbb{R}%
, $ $j=1,2,...,n$, we have 
\begin{equation}
\sum_{\ell =0}^{\infty }\sum_{m=-\ell }^{\ell }d_{\ell }\bigg[Y_{\ell
m}(x)-\sum_{j=1}^{n}\gamma _{j}Y_{\ell m}(x_{j})\bigg]^{2}\geq
C_{2}\varepsilon^{2H},  \label{Eq:YlmControl}
\end{equation}%
where $\varepsilon =\min \left\{ d_{\mathbb{S}^{2}}\left( x,x_{k}\right) ,\
k=1,...,n\right\} $.
\end{lemma}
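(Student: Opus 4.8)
The plan is to recognize the left-hand side of \eqref{Eq:YlmControl} as an $L^{2}$-prediction error for an auxiliary isotropic Gaussian field, and then to invoke the strong local nondeterminism estimate of \cite{LanMarXiao1}. First I would introduce the centered real-valued isotropic Gaussian field $T=\{T(x),x\in\mathbb{S}^{2}\}$ whose angular power spectrum is exactly the given sequence $\{d_{\ell}\}$, namely $T(x)=\sum_{\ell=0}^{\infty}\sum_{m=-\ell}^{\ell}a_{\ell m}Y_{\ell m}(x)$, where the coefficients satisfy $\overline{a_{\ell m}}=(-1)^{m}a_{\ell,-m}$ (so that $T$ is real) together with $\mathbb{E}(a_{\ell m}\overline{a_{\ell' m'}})=d_{\ell}\,\delta^{\ell'}_{\ell}\delta^{m'}_{m}$. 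Since the $\gamma_{j}$ are real, the combination $T(x)-\sum_{j=1}^{n}\gamma_{j}T(x_{j})$ is real-valued, and expanding the square while using the orthogonality of the $a_{\ell m}$ gives
\begin{equation*}
\mathbb{E}\Big[T(x)-\sum_{j=1}^{n}\gamma_{j}T(x_{j})\Big]^{2}=\sum_{\ell=0}^{\infty}\sum_{m=-\ell}^{\ell}d_{\ell}\,\Big|Y_{\ell m}(x)-\sum_{j=1}^{n}\gamma_{j}Y_{\ell m}(x_{j})\Big|^{2},
\end{equation*}
which is precisely the left-hand side of \eqref{Eq:YlmControl} (the bracket there being read as a modulus squared). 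Thus the claimed inequality is equivalent to the assertion that the prediction error of $T$ at $x$ given $T(x_{1}),\dots,T(x_{n})$ is bounded below by $C_{2}\varepsilon^{2H}$, uniformly over $n$, the configuration $x_{1},\dots,x_{n}$, and the weights $\gamma_{j}$.

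With this reformulation the result reduces to the SLND property of isotropic Gaussian fields in \cite{LanMarXiao1}. The essential point is that Proposition 7 there provides exactly such a lower bound for an isotropic field whose angular power spectrum decays like $\ell^{-\alpha}$ for $\alpha$ in an admissible range, the exponent on $\varepsilon$ in the conclusion being $\alpha-2$. I would therefore verify that the two-sided bound \eqref{Bound-dl} furnished by Theorem \ref{APSbound} matches the hypotheses of that proposition with $\alpha=2H+2$: since $H\in(0,1/2]$ we have $2H+2\in(2,3]\subset(2,4)$, which is the relevant range, and the constant $K_{1}$ in \eqref{Bound-dl} is uniform in $\ell$ and independent of any point configuration. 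Substituting $\alpha=2H+2$ into the conclusion yields exponent $\alpha-2=2H$, which is exactly the power of $\varepsilon$ on the right-hand side of \eqref{Eq:YlmControl}.

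Two bookkeeping points remain, neither of which is a genuine obstacle. The $\ell=0$ term equals $d_{0}\,(1-\sum_{j}\gamma_{j})^{2}|Y_{00}|^{2}\geq 0$ and may simply be discarded, so the lower bound is driven, as expected, by the high-frequency coefficients with $\ell\geq 1$, for which the sharp decay $d_{\ell}\asymp\ell^{-(2H+2)}$ is exactly what Proposition 7 requires. The main place where care is needed is in confirming that the constant $C_{2}$ produced by Proposition 7 is independent of $n$ and of $x_{1},\dots,x_{n}$ and depends on $H$ only; this uniformity is the crux of the SLND property and is the one step in transcribing the hypotheses of \cite{LanMarXiao1} that must be checked attentively. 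Once the admissibility of the exponent $2H+2$ and this uniform dependence of the constant are verified, the lemma follows immediately from the identification of the left-hand side as the prediction error of $T$.
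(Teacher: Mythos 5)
Your proposal is correct and follows essentially the same route as the paper, which likewise obtains the lemma directly as a consequence of Proposition 7 in \cite{LanMarXiao1} after noting that \eqref{Bound-dl} puts the angular power spectrum in the admissible regime $d_{\ell}\asymp \ell^{-(2H+2)}$ with $2H+2\in(2,3]$. Your additional step of realizing the left-hand side as the prediction error of an auxiliary isotropic field with spectrum $\{d_{\ell}\}$ is exactly the intended reading, and the exponent bookkeeping $\alpha-2=2H$ matches the paper's conclusion.
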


Now we are ready to state and prove the following theorem. Its conclusion is
referred to as the property of strong local nondeterminism of SFBM $\left\{
B\left( x\right) ,x\in \mathbb{S}^{2}\right\}$.

\begin{theorem}
\label{Th:SLND} For a SFBM $\left\{ B\left( x\right) ,x\in \mathbb{S}%
^{2}\right\} $, there exists a constant $K_{2}>0$ depending only on the
Hurst index $H\in (0,1/2],$ such that for all integers $n\geq 1$ and all $%
x,x_{1},...,x_{n}\in \mathbb{S}^{2},$ we have%
\begin{equation}  \label{Eq:SLND}
\mathrm{Var}\left( B\left( x\right) |B\left( x_{1}\right) ,...,B\left(
x_{n}\right) \right) \geq K_{2}\min_{0\leq k\leq n} d_{\mathbb{S}^{2}}\left(
x,x_{k}\right) ^{2H},
\end{equation}
where $\mathrm{Var}\left( B\left( x\right) | B\left( x_{1}\right)
,...,B\left( x_{n}\right)\right)$ denotes the conditional variance of $B( x)$
given $B\left( x_{1}\right) ,...,B( x_{n})$, and $x_{0}=N$.
\end{theorem}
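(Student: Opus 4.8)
The plan is to identify the conditional variance with a deterministic $L^{2}(\mathbb{P})$ prediction error and then to feed the Karhunen-Lo\`{e}ve expansion \eqref{repBM} into Lemma \ref{Ing}. Since the vector $(B(x),B(x_1),\dots,B(x_n))$ is centered and jointly Gaussian, the conditional law of $B(x)$ given $B(x_1),\dots,B(x_n)$ is Gaussian, and its variance is nonrandom and equals the mean squared error of the best linear predictor:
\[
\mathrm{Var}\big(B(x)\mid B(x_1),\dots,B(x_n)\big)=\inf_{\gamma_1,\dots,\gamma_n\in\mathbb{R}}\mathbb{E}\bigg[\Big(B(x)-\sum_{j=1}^n\gamma_j B(x_j)\Big)^2\bigg].
\]
Hence it suffices to bound this infimum from below, uniformly in $\gamma_1,\dots,\gamma_n$, by a constant multiple of $\min_{0\le k\le n}d_{\mathbb{S}^2}(x,x_k)^{2H}$.

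Next I would substitute \eqref{repBM} and exploit the two defining features of SFBM at the North pole. Because $B(N)=0$ a.s. and $x_0=N$, adjoining a term $\gamma_0 B(x_0)$ changes nothing, so setting $\gamma_0:=1-\sum_{j=1}^n\gamma_j$ and using $Y_{\ell m}(x_0)=Y_{\ell m}(N)$ to absorb every constant $Y_{\ell m}(N)$ into the $j=0$ summand gives
\[
B(x)-\sum_{j=1}^n\gamma_j B(x_j)=\sum_{\ell=0}^\infty\sum_{m=-\ell}^\ell i\sqrt{\pi d_\ell}\,\varepsilon_{\ell m}\Big(Y_{\ell m}(x)-\sum_{j=0}^n\gamma_j Y_{\ell m}(x_j)\Big).
\]
The orthonormality relations \eqref{indep-coef} for the complex Gaussian coefficients $\{\varepsilon_{\ell m}\}$ then collapse the second moment into a weighted Parseval sum,
\[
\mathbb{E}\bigg[\Big(B(x)-\sum_{j=1}^n\gamma_j B(x_j)\Big)^2\bigg]=\pi\sum_{\ell=0}^\infty\sum_{m=-\ell}^\ell d_\ell\Big[Y_{\ell m}(x)-\sum_{j=0}^n\gamma_j Y_{\ell m}(x_j)\Big]^2,
\]
which is exactly $\pi$ times the left-hand side of \eqref{Eq:YlmControl} evaluated at the $n+1$ points $x_0,x_1,\dots,x_n$. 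The routine part here is verifying that the complex phase $i\sqrt{\pi d_\ell}$, together with the conjugation symmetry built into \eqref{indep-coef}, combines to yield precisely the real sum on the right.

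Finally I would apply Lemma \ref{Ing} to the $n+1$ points $x_0,x_1,\dots,x_n$ with the real coefficients $\gamma_0,\gamma_1,\dots,\gamma_n$. Inequality \eqref{Eq:YlmControl} holds for every real choice of coefficients, so in particular for the constrained family $\gamma_0=1-\sum_{j=1}^n\gamma_j$, and it produces the lower bound $C_2\big(\min_{0\le k\le n}d_{\mathbb{S}^2}(x,x_k)\big)^{2H}$ uniformly in $\gamma_1,\dots,\gamma_n$. Taking the infimum over $\gamma_1,\dots,\gamma_n$ and setting $K_2=\pi C_2$ then yields \eqref{Eq:SLND}. I expect the main obstacle to be the bookkeeping at the North pole: the identities $B(N)=0$ and $x_0=N$ must be used in tandem so that the constrained minimization over $(\gamma_1,\dots,\gamma_n)$ reappears as a special instance of the unconstrained estimate in Lemma \ref{Ing}, and so that the minimum on the right-hand side of \eqref{Eq:SLND} correctly ranges over $0\le k\le n$ and not merely $1\le k\le n$.
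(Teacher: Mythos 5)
Your proposal is correct and follows essentially the same route as the paper: identify the conditional variance with the linear prediction error, set $\gamma_0=1-\sum_{j=1}^n\gamma_j$ so the $Y_{\ell m}(N)$ terms from the Karhunen--Lo\`{e}ve expansion are absorbed into a $j=0$ summand at $x_0=N$, reduce via \eqref{indep-coef} to the weighted Parseval sum, and invoke Lemma \ref{Ing} at the $n+1$ points. No gaps to report.
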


\begin{remark}
Note that the strong local nondeterminism \eqref {Eq:SLND} here is slightly
different from the SLND property proved in \cite{LanMarXiao1} for isotropic
Gaussian random fields: the minimum on the right hand side of \eqref
{Eq:SLND} is not only taken over $x_{1}, ... ,x_{n}$ but also over $x_{0}=N$%
. This is because of the assumption $B\left( N\right) =0$ in the definition
of SFBM. From statistics viewpoint, $\mathrm{Var}\left( B\left( x\right)
|B\left( x_{1}\right) ,...,B\left( x_{n}\right)\right)$ is the squared error
of predicting the value $B(x)$, given observations of $B$ at locations $x_1,
\ldots, x_n$. Since we already know that $B\left( N\right) =0$, this
information may reduce the prediction error. 
\end{remark}

\textbf{Proof of Theorem \ref{Th:SLND}.} It is known that for a Gaussian
random field $B$,%
\begin{equation*}
\mathrm{Var}\left( B(x)|B\left( x_{1}\right) ,...,B\left( x_{n}\right)
\right) =\inf \left\{ \mathbb{E}\bigg[\Big(B(x)-\dsum\limits_{j=1}^{n}\gamma
_{j}B(x_{j})\Big)^{2}\bigg]\right\} ,
\end{equation*}%
where the infimum is taken over all $\left( \gamma _{1},...,\gamma
_{n}\right) \in \mathbb{R}^{n}.$ Hence, in order to establish \eqref{Eq:SLND}%
, it is sufficient to prove that there exists a positive constant $C_{3}$
such that 
\begin{equation}
\mathbb{E}\bigg[\Big(B(x)-\dsum\limits_{j=1}^{n}\gamma _{j}B(x_{j})\Big)^{2}%
\bigg]\geq C_{3}\,\varepsilon ^{2H}  \label{Eq:SLND2}
\end{equation}%
holds for all $\gamma _{1},...,\gamma _{n}\in \mathbb{R}$, where $%
\varepsilon =\min \left\{ d_{\mathbb{S}^{2}}\left( x,x_{k}\right) ,\
k=0,...,n\right\} .$ Let $\gamma _{0}=1-\dsum\limits_{j=1}^{n}\gamma _{j},$
then it follows from the Karhunen-Lo\`{e}ve expansion (\ref{repBM}) of $%
B\left( x\right) $ and the properties of random coefficients $\{\varepsilon
_{\ell m}\}_{lm}$ in (\ref{indep-coef}) that the left hand side of (\ref%
{Eq:SLND2}) is equal to 
\begin{equation}
\begin{split}
& \mathbb{E}\Bigg\{\bigg[\dsum\limits_{\ell =0}^{\infty
}\dsum\limits_{m=-\ell }^{\ell }i\sqrt{\pi d_{\ell }}\,\varepsilon _{\ell m}%
\Big(Y_{\ell m}\left( x\right) -\dsum\limits_{j=0}^{n}\gamma _{j}Y_{\ell
m}\left( x_{j}\right) \Big)\bigg]^{2}\Bigg\} \\
& =\dsum\limits_{\ell =0}^{\infty }\dsum\limits_{m=-\ell }^{\ell }\pi
d_{\ell }\mathbb{E}\big(\left\vert \varepsilon _{\ell m}\right\vert ^{2}\big)%
\bigg \vert Y_{\ell m}\left( x\right) -\dsum\limits_{j=0}^{n}\gamma
_{j}Y_{\ell m}\left( x_{j}\right) \bigg \vert^{2} \\
& =\pi \dsum\limits_{\ell =0}^{\infty }\dsum\limits_{m=-\ell }^{\ell
}d_{\ell }\bigg \vert Y_{\ell m}\left( x\right)
-\dsum\limits_{j=0}^{n}\gamma _{j}Y_{\ell m}\left( x_{j}\right) \bigg \vert%
^{2},
\end{split}
\label{Eq2}
\end{equation}%
where $x_{0}=N.$ Therefore (\ref{Eq:SLND2}) is an immediate consequence of
Lemma \ref{Ing} in view of the equalities (\ref{Eq2}). This completes the
proof of (\ref{Eq:SLND}). {\ \rule{0.5em}{0.5em}} 

\bigskip

\end{document}